\mathchardef\emptyset="001F
\theoremstyle{plain}
\newtheorem{theorem}{Theorem}[section]
\newtheorem{lemma}[theorem]{Lemma}
\newtheorem{corollary}[theorem]{Corollary}
\newtheorem{definition}[theorem]{Definition}
\theoremstyle{definition}
\theoremstyle{remark}
\numberwithin{equation}{section}
\newcommand{\Div}{\mathrm{Div}}
\newcommand{\e}{\varepsilon}
\newcommand{\weak}{\rightharpoonup}
\newcommand{\R}{{\mathbb R}}
\renewcommand{\S}{{\mathcal S}}
\newcommand{\T}{{\mathbb T}}
\newcommand{\M}{{\mathbb M}}
\newcommand{\Mmn}{\M^{{m\times n}}}
\newcommand{\Mqt}{\M^{4\times 3}}
\renewcommand{\S}{{\mathbb S}}
\newcommand{\no}{\noindent}
\newcommand{\diag}{\hbox{{\rm diag}}}
\renewcommand{\a}{\alpha}
\newcommand{\dsp}{\displaystyle}
\newcommand{\rank}{\mathrm{rank}}
\def\XXint#1#2#3{{\setbox0=\hbox{$#1{#2#3}{\int}$}
\vcenter{\hbox{$#2#3$}}\kern-.5\wd0}}
\title[]{On the relationship between rank-$(n-1)$ convexity and 
${\mathcal S}$-quasiconvexity}
\author[Mariapia Palombaro]{Mariapia Palombaro}
\begin{document}
\baselineskip3.15ex
\vskip .3truecm

\maketitle
\small{
\center{SISSA}
\vspace{-1mm}
\center{Via Beirut 2-4, 34014 Trieste, Italy}
\vspace{-1mm}
\center{Email: palombar@sissa.it}
\center{}
}

\vspace{2mm}

\begin{abstract}
\small{We prove that rank-$(n-1)$ convexity does not imply 
${\mathcal S}$-quasiconvexity (i.e., 
quasiconvexity with respect to divergence free fields) in $\M^{m\times n}$ 
for $m>n$, by adapting the well-known 
\v Sver\'ak's counterexample \cite{sver91} to the solenoidal setting.
On the other hand, we also remark that 
rank-$(n-1)$ convexity and ${\mathcal S}$-quasiconvexity
turn out to be equivalent in the space of $n\times n$ diagonal matrices. 
This follows by a generalization of M\"uller's work \cite {muller}. 

\vskip.3truecm
\noindent  {\bf Key words:}
${\mathcal A}$-quasiconvexity, lower semicontinuity, solenoidal fields.
\vskip.2truecm
\noindent  {\bf 2000 Mathematics Subject Classification: 49J45}.
}
\end{abstract}

\section{Introduction}
\no
The purpose of this note is to generalize some known results about the 
relationship  
between rank-one convexity and quasiconvexity to the context of divergence free 
fields. 
This is motivated by the lower semicontinuity results provided by 
Fonseca and M\"uller (\cite{fonmu}, Theorems 3.6-3.7).
Let us recall the relevant definitions.
A function $f:\M^{m\times n}\to\R$ on the $m\times n$ matrices is called 
rank-one convex if it is convex on each rank-one line, i.e., for every $A,Y\in\M^{m\times n}$ 
with rank$(Y)=1$, the function $\dsp t\to f(A+tY)$ is convex. 
It is quasiconvex if 
\begin{equation*}
\int_{\T^{n}} f(A+\nabla\varphi)\,dx\geq f(A)\,,
\end{equation*}
\no
for all $A\in\M^{m\times n}$ and for all $\T^{n}$-periodic functions $\varphi\in C^{\infty}(\R^{n},\R^{m})$, where $\T^{n}:=(0,1)^{n}$.
Quasiconvexity implies rank-one convexity. 
Whether the converse is true for $m=2$ and $n\geq 2$ is an outstanding open 
problem. In the higher dimensional case $m\geq 3$, 
\v Sver\'ak's counterexample \cite{sver91} shows that rank-one convexity is not 
the same as quasiconvexity. On the other hand, M\"uller \cite {muller} proved 
that the two notions are equivalent for $2\times 2$ diagonal matrices. 
See also \cite{chaudmull}, \cite{lmm08} for further generalizations.

In the spirit of ${\mathcal A}$-quasiconvexity (see, e.g., \cite{fonmu}), 
we provide in this note the counterpart of these results in the context of 
divergence free fields. 
The corresponding notion of quasiconvexity for solenoidal 
fields, that we call ${\mathcal S}$-quasiconvexity, is defined as follows. 

%\begin{equation*}
%L^{p}_{\mathcal S}(\T^{n},\M^{m\times n}):=
%\left\{B\in L^{p}_{loc}(\R^n,\M^{m\times n}),\, 
%Q\hspace{-1pt}-\hspace{-1pt}\text{periodic},\,\,
%\textstyle{\dashint_Q} B(x)dx=0\,,
%\Div B=0 \text{ in } \dpr(\R^d,\R^m)\right\}.  
%\end{equation*}
 
\begin{definition}\label{defi} 
A function $f:\M^{m\times n}\to \R$ is said to be 
${\mathcal S}$-quasiconvex 
%in $L^{p}(\R^{n},\M^{m\times n})$, $p\geq 1$, 
if for each smooth $\T^{n}$-periodic matrix field 
$B:\R^{n}\to\M^{m\times n}$ such that $\Div B=0$,  
%in $\dpr(\R^{n},\R^m)$
the following inequality holds: 
\begin{equation}\label{disug}
\int_{\T^{n}} f(B)\,dx\geq f\left(\int_{\T^{n}} B\,dx\right)\,.
\end{equation}
\end{definition}

\no
The symbol $\Div$ in the Definition \ref{defi} denotes the operator which acts as 
the divergence on each row of the matrix field $B$. 
%\begin{definition} 
%A function $f:\M^{m\times n}\to\R$ is said to be $\rank$-$(n-1)$ convex 
%if $f$ is convex along $\rank$-$(n-1)$ lines, {\emph i.e.} 
%if $t\to f(M+tV)$ is convex 
%for every $M,V\in\M^{m\times n}$ with $\rank(V)=n-1$. 
%\end{definition} 
\no
While quasiconvexity implies convexity along rank-one lines, 
it is easily checked that 
${\mathcal S}$-quasiconvexity implies convexity along $\rank$-$(n-1)$ lines. Indeed if 
a function $f$ is ${\mathcal S}$-quasiconvex, then $t\to f(A+tV)$ is convex 
for every $A,V\in\M^{m\times n}$ with $\rank(V)\leq n-1$. 
Our aim in this note is to show that 
$\rank$-$(n-1)$ convexity does not imply 
${\mathcal S}$-quasiconvexity in $\M^{m\times n}$, for $m\geq n+1\geq 4$. 
More precisely we prove the following result.

\begin{theorem}\label{dadimo}
For all $n\geq 3$ and $m\geq n+1$, there exists 
$F:\Mmn\to \R$ such that $F$ is rank-$(n-1)$ convex 
but not ${\mathcal S}$-quasiconvex.
\end{theorem}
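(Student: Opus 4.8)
The plan is to adapt \v Sver\'ak's construction \cite{sver91} to the solenoidal framework. Recall that \v Sver\'ak's counterexample starts from the subspace $L=\{\diag(x,y,z):x,y,z\in\R\}\subset\M^{3\times 3}$ (or rather the associated $4$-dimensional space of gradients in \cite{sver91}), on which one builds a quadratic form that is rank-one convex on $L$ but whose periodic average can be pushed below its value at the mean by an oscillating gradient supported in directions that probe $L$. The first step is therefore to choose, for $n\geq 3$ and $m\geq n+1$, a suitable linear subspace of $\M^{m\times n}$ that is simultaneously a ``wave cone'' for the operator $\Div$ (i.e.\ spanned by admissible amplitudes $\lambda\otimes\xi$ with $\lambda\perp\xi$ in an appropriate sense, so that oscillations with those amplitudes are divergence free) and on which the rank-$(n-1)$-convexity constraint is as weak as the rank-one constraint was in the classical case. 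Concretely, I would work on the three (or more) coordinate directions and use amplitudes supported on matrices of rank $\le n-1$ whose range is controlled, exploiting that for $m>n$ there is ``extra room'' in the target to arrange $\Div B=0$.

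Second, I would transplant the quadratic polynomial. Let $g:\R^{k}\to\R$ be \v Sver\'ak's quadratic form (with $k=3$ the relevant number of free parameters), which is rank-one convex along the relevant lines but not quasiconvex, and define $F$ on $\M^{m\times n}$ by pulling $g$ back along a linear projection onto the chosen $k$-dimensional subspace and then adding a large multiple $\e(|M|^{2}+|M|^{4})$ of a fixed convex function, exactly as in \cite{sver91}. For $\e$ small this perturbation does not destroy the failure of $\S$-quasiconvexity (the test field witnessing it gives a strictly negative average, and the perturbation term contributes only $O(\e)$), while for $\e$ bounded below it does restore rank-$(n-1)$ convexity: off the subspace the convex term dominates, and on the subspace one checks directly that the Hessian of $F$ is positive semidefinite in every rank-$\le(n-1)$ direction. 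This is the standard ``add a convex bump'' argument and I expect it to go through verbatim once the linear-algebra setup of the first step is fixed.

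Third, I must exhibit the periodic divergence-free field $B$ violating \eqref{disug}. The natural candidate is $B=\overline B+$ (an oscillation), where the oscillation is a finite trigonometric sum $\sum_{\xi} a_{\xi}\,e^{2\pi i\xi\cdot x}$ with each Fourier amplitude $a_{\xi}$ a matrix annihilated by $\xi$ under $\Div$ (that is, $a_{\xi}\xi=0$ read row-wise), chosen so that its projection onto the $k$-dimensional subspace reproduces the oscillation that defeats $g$'s quasiconvexity. The point is that the Fourier side makes the constraint $\Div B=0$ transparent: it is just an orthogonality relation between each amplitude and its frequency, and because $m>n$ one can always complete \v Sver\'ak's amplitudes to matrices satisfying it. Then $\int_{\T^{n}}F(B)\,dx<F(\int_{\T^{n}}B\,dx)$ follows from the corresponding inequality for $g$ plus the $O(\e)$ estimate.

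The main obstacle, and the only place where genuine work beyond bookkeeping is needed, is the first step: finding a subspace $D\subset\M^{m\times n}$ together with explicit divergence-free oscillatory amplitudes so that (a) the cone of directions along which $\S$-quasiconvexity forces convexity, restricted to $D$, is \emph{exactly} (no larger than) the set of rank-one directions used by \v Sver\'ak, and (b) the three commuting directions of his diagonal example can be realized as differences of such amplitudes. If the rank-$(n-1)$ cone turned out to be strictly larger on $D$, the transplanted $F$ would fail to be rank-$(n-1)$ convex and the construction would collapse; the hypothesis $m\ge n+1$ is what I expect to provide the slack that prevents this. Once (a)--(b) are secured, the remainder is the routine perturbation and Fourier computation sketched above.
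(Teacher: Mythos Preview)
Your overall strategy matches the paper's almost exactly: a three-dimensional subspace $L\subset\Mmn$ whose only rank-$(n-1)$ directions are the three generators, the cubic function $-\eta_{1}\eta_{2}\eta_{3}$ on $L$, an extension to all of $\Mmn$ \`a la \v Sver\'ak, and an explicit trigonometric divergence-free field $B=\cos(2\pi x_{3})V_{1}+\cos(2\pi x_{1})V_{2}+\cos 2\pi(x_{1}-x_{3})V_{3}$ taking values in $L$. So the plan is sound. There are, however, two concrete errors in your write-up that would break the argument if executed as stated.

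First, you repeatedly call $g$ a \emph{quadratic form}. It is not: \v Sver\'ak's function is the cubic $g(\eta)=-\eta_{1}\eta_{2}\eta_{3}$, and this is essential here. As recalled at the end of the introduction (Tartar \cite{tartar2}), every rank-$(n-1)$ convex \emph{quadratic} form is automatically $\mathcal{S}$-quasiconvex, so no quadratic $g$ can produce a counterexample. The cubic is what makes $g$ linear (hence convex) along each coordinate axis of $L$ while still allowing $\int_{\T^{n}}g(B)\,dx<0$.

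Second, your extension uses a single parameter $\e$ that you ask to be simultaneously small (to preserve the strict inequality) and ``bounded below'' (to force rank-$(n-1)$ convexity). These requirements are in conflict. The correct extension, as in \cite{sver91} and in the paper, carries \emph{two} parameters:
\[
F(X)=g(PX)+\e\bigl(|X|^{2}+|X|^{4}\bigr)+k\,|X-PX|^{2},
\]
with $\e>0$ first chosen small enough that $\int_{\T^{n}}\bigl(g(B)+\e|B|^{2}+\e|B|^{4}\bigr)\,dx<0$, and then $k=k(\e)$ chosen large so that $F$ is rank-$(n-1)$ convex. The large penalty $k|X-PX|^{2}$, not the small convex bump, is what handles directions close to but not in $L$; your sentence ``off the subspace the convex term dominates'' is false for the term you actually wrote down.
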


\no
The proof of Theorem \ref{dadimo} is essentially based on the 
\v Sver\'ak's counterexample adapted to the solenoidal setting and is 
reminiscent of an example given by Tartar in connection with a theorem in 
compensated compactness (see \cite{tartar}, pp. 185--6). 

We do not know whether ${\mathcal S}$-quasiconvexity and 
$\rank$-$(n-1)$ convexity are equivalent in the case when $m=n$. 
%We expect that the latter case carries 
%difficulties similar to those for gradient fields in the case $m=2$ and $n\geq2$. 
Nevertheless M\"uller's result on quasiconvexity on diagonal matrices 
extends as well to the divergence free fields.   
If we identify the space $D(n)$ of diagonal $n\times n$ matrices with $\R^{n}$ 
via $\dsp{y}\to \diag(y_{1},\dots,y_{n})$, then 
a rank-$(n-1)$ convex function on $D(n)$ may be regarded as a 
function on $\R^{n}$ which is convex on each hyperplane 
$\{y_{i}=\rm{const}\}$, $i=1,\dots,n$. 
Then, a straightforward generalization of Theorem 1 in \cite{muller} 
(see also Theorem 1.5 in \cite{lmm08})
asserting that 
rank-one convexity implies quasiconvexity on diagonal matrices, leads to the 
following statement assuring that 
rank-$(n-1)$ convexity implies ${\mathcal S}$-quasiconvexity on diagonal matrices. 

\begin{theorem}\label{teomuller}
Let $1<p<\infty$. Assume that $f:\R^{n}\to\R$ is convex  on each hyperplane 
$\{y_{i}=\rm{const}\}$, $i=1,\dots,n$, and satisfy  
$0\leq f(y)\leq C(1+|y|^{p})$. Suppose that 
\begin{align*}
& u_{h}^{i}\weak u_{\infty}^{i}\,,\quad\:
  \text{ in } L^{p}_{\rm loc}(\R^{n}) \: \text{ as } h\to\infty\,,
  \quad i=1,\dots,n\,, \\
& \partial_{i}u_{h}^{i}\to \partial_{i}u_{\infty}^{i}\,,\quad\:
  \text{ in } W^{-1,p}_{\rm loc}(\R^{n})\: \text{ as } h\to\infty\,,
  \quad i=1,\dots,n. 
\end{align*}
Then for every open set $V\subset\R^{n}$
\begin{equation*}
\int_{V}f(u_{\infty}^{1},\dots,u^{n}_{\infty})\,dx \leq
\liminf_{h\to\infty}\int_{V}f(u_{h}^{1},\dots,u_{h}^{n})\,dx \,.
\end{equation*}
\end{theorem}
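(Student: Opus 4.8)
The plan is to adapt M\"uller's argument for rank-one convexity on $2\times 2$ diagonal matrices (Theorem 1 in \cite{muller}) to the $n$-dimensional solenoidal situation. The key structural fact is that for diagonal matrix fields $B=\diag(u^1,\dots,u^n)$ the constraint $\Div B=0$ decouples into the $n$ separate scalar conditions $\partial_i u^i=0$ (no summation), each involving only one component and one coordinate direction; this is precisely the structure exploited by M\"uller, and it is what makes the "diagonal" case tractable while the general $m\times n$ case fails, by Theorem \ref{dadimo}. First I would reduce to the homogenized/periodic setting by a blow-up and covering argument: it suffices to prove the liminf inequality for sequences $u_h^i$ that are $\T^n$-periodic oscillations around constants $a^i$, with $\partial_i u_h^i\to 0$ strongly in $W^{-1,p}$, and then recover the general local statement by a standard slicing of $V$ into small cubes together with a De~Giorgi–type localization (the growth bound $0\le f(y)\le C(1+|y|^p)$ gives the equi-integrability needed to neglect boundary layers).

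The heart of the proof is an \emph{averaging in the $i$-th variable} trick, carried out one coordinate at a time. Fix $i$; since $\partial_i u_h^i$ is small in $W^{-1,p}$, one can replace $u_h^i$ — up to an error that is small in $L^p$ and hence, by the growth condition and a truncation, negligible in the energy — by a field $\tilde u_h^i$ that is genuinely independent of the variable $x_i$. Concretely, convolve $u_h^i$ with a mollifier in the $x_i$ direction, or simply average $u_h^i$ over $x_i\in(0,1)$; controlling the difference uses $\partial_i u_h^i\to 0$ in negative Sobolev norm plingtogether with a Poincar\'e–Wirtinger inequality in the $x_i$-slice. Having done this for every $i$, we apply Jensen's inequality in the variable $x_i$ to the convex function $t\mapsto f(\dots,t,\dots)$ with the other arguments frozen: since $f$ is convex on each hyperplane $\{y_i=\text{const}\}$, it is in particular convex in each variable separately, so
\begin{equation*}
\int_0^1 f(\dots,u_h^i(x),\dots)\,dx_i \ \ge\ f\Bigl(\dots,\int_0^1 u_h^i\,dx_i,\dots\Bigr),
\end{equation*}
and the inner average no longer depends on $x_i$. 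Iterating this over $i=1,\dots,n$ collapses all the oscillations and, combined with weak lower semicontinuity of the remaining convex averaging operations and the weak convergence $u_h^i\weak u_\infty^i$, yields the claimed inequality. (Equivalently, one may phrase this as: the Young measure generated by $(u_h^1,\dots,u_h^n)$ has, by the constraint, a product-type structure in the appropriate sense, and $f$ is convex along exactly those directions that the constraint leaves free.)

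The step I expect to be the main obstacle is the rigorous passage from "$\partial_i u_h^i$ small in $W^{-1,p}$" to "$u_h^i$ may be replaced by something $x_i$-independent at negligible energy cost," uniformly as $h\to\infty$. This is delicate for two reasons: first, the replacement error is only controlled in $L^p$, not in $L^\infty$, so to conclude that the energies converge one needs an equi-integrability / truncation argument using the upper growth bound $f(y)\le C(1+|y|^p)$ (splitting the domain into where $|u_h^i|$ is large, of small measure, and where it is bounded, where strong convergence of the truncations applies); second, the averaging must be done simultaneously in all $n$ directions while keeping the other components fixed, and one must check the errors do not accumulate — this is handled by performing the $n$ reductions one after another and estimating the total error by a finite sum of terms each tending to $0$. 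Everything else — the covering of $V$, the reduction to the periodic case, and the final chain of Jensen inequalities — is routine once this approximation lemma is in place. This is exactly the point where \cite{muller} and \cite{lmm08} do the real work, and I would follow their treatment closely, the only new ingredient being that the relevant scalar constraint is now $\partial_i u^i=0$ in each of the $n$ directions rather than the single planar curl-type constraint of the $2\times2$ case.
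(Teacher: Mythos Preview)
The paper gives no proof of this theorem: it is presented as ``a straightforward generalization of Theorem~1 in \cite{muller} (see also Theorem~1.5 in \cite{lmm08})'' and left at that, so there is no in-paper argument to compare against. Your overall plan---reduce to the periodic, exactly constrained case and then run an iterated Jensen argument, deferring the hard approximation step to \cite{muller,lmm08}---is the right framework and is in line with those references.

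There is, however, a real confusion in your Jensen step. After replacing $u_h^i$ by $\tilde u_h^i$ independent of $x_i$, integration in $x_i$ leaves the $i$-th argument \emph{fixed} while the other $n-1$ arguments vary; the convexity needed is therefore exactly the hypothesis ``$f$ convex on $\{y_i=\mathrm{const}\}$'', i.e.\ \emph{joint} convexity in the remaining $n-1$ variables, and the correct inequality is
\[
\int_0^1 f(\tilde u_h^1,\dots,\tilde u_h^n)\,dx_i \ \ge\ f\Bigl(\int_0^1 \tilde u_h^1\,dx_i,\dots,\ \tilde u_h^i\ ,\dots,\int_0^1 \tilde u_h^n\,dx_i\Bigr),
\]
with the $i$-th slot untouched. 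Your displayed formula instead pulls the average into the $i$-th slot and appeals to separate convexity in $y_i$; after the replacement this is a tautology and advances nothing. With the roles swapped, the iteration over $i=1,\dots,n$ does collapse everything to the mean as intended.

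One further caution on the approximation step: the operator $u\mapsto(\partial_1 u^1,\dots,\partial_n u^n)$ has symbol $\diag(\xi_1,\dots,\xi_n)$, which does \emph{not} have constant rank, so the Fonseca--M\"uller projection lemma from \cite{fonmu} is not available off the shelf. This is why the passage from ``$\partial_i u_h^i\to 0$ in $W^{-1,p}$'' to an $x_i$-independent substitute is genuinely delicate and why \cite{muller,lmm08} bring in harmonic-analytic tools rather than a bare Poincar\'e--Wirtinger estimate; your proposal to simply mollify or average in $x_i$ and control the error by Poincar\'e is likely insufficient on its own.
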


\no
We remark that, for $n=2$, Theorem \ref{teomuller} reduces itself to  
Theorem 1 in \cite{muller} (in \cite{muller} the case $n=p=2$ is considered; 
see Theorem 1.5 in \cite{lmm08} for generalization to any $n\geq 2$ and 
$1<p<\infty$). 
Indeed, in dimension two, the notion of 
${\mathcal S}$-quasiconvexity coincides with that of quasiconvexity since any 
divergence free field defines a gradient field upon left multiplication by the 
rotation $\textstyle{\left(\begin{array}{lr}0 & -1\\ 1 & 0\end{array}\right)}$. 
Therefore \v Sver\'ak's example \cite{sver91} shows that the conclusion of 
Theorem \ref{dadimo} already holds for $n=2$.

Finally we recall that another result in the direction of Theorem \ref{teomuller} is 
that if $f$ is a quadratic form and is $\rank$-$(n-1)$ convex, then $f$ is also 
${\mathcal S}$-quasiconvex (see \cite{tartar2}).

\section{Proof of Theorem \ref{dadimo}}
\no
Theorem \ref{dadimo} is a consequence of Lemma \ref{mainth} and Corollary 
\ref{coroll} below.
We will basically follow \v Sver\'ak's strategy. 
The key idea is to find three rank-$(n-1)$ directions such that 
these directions are the only rank-$(n-1)$ directions in the vector space spanned by them, which we call $L$. 
Then one defines a rank-$(n-1)$ convex function on $L$ and 
seeks a divergence free field that takes values only in $L$ and for which the 
inequality \eqref{disug} is violated. The desired function $F$ is then obtained 
by suitably extending the rank-$(n-1)$ convex function defined on $L$ to 
the whole space. 
We first construct an example in $\Mqt$ and then we will extend it to $\Mmn$.
Let  $V_{1},V_{2},V_{3}\in\Mqt$ be given by 

\begin{equation}\label{rank2dir}
V_{1}=\left(
\begin{array}{lll}
1 & 0 & 0 \\
0 & 1 & 0 \\
0 & 0 & 0 \\
0 & 0 & 0 
\end{array}
\right) \,,\quad 
V_{2}=\left(
\begin{array}{lll}
0 & 1 & 0 \\
0 & 0 & 0 \\
0 & 0 & 1 \\
0 & 0 & 0 
\end{array}
\right) \,,\quad 
V_{3}=\left(
\begin{array}{lll}
0 & 0 & 0 \\
0 & 0 & 0 \\
0 & 1 & 0 \\
1 & 1 & 1 
\end{array}
\right) \,.
\end{equation}
\no
We consider the three-dimensional subspace of $\Mqt$ generated by $V_{1},V_{2},V_{3}$: 

\begin{equation}\label{sub}
L:={\rm span}\{V_{1},V_{2},V_{3}\}=\{ \eta_{1} V_{1}+\eta_{2}V_{2}+\eta_{3}V_{3} \,,  \eta_{1},\eta_{2},\eta_{3} \in\R\}\,,
\end{equation}
\no
and we define the function $f: L\to \R$ in the following way
\begin{equation}\label{func}
\forall\, \eta_{1},\eta_{2},\eta_{3}\in\R \quad \quad
f(\eta_{1}V_{1}+\eta_{2}V_{2}+\eta_{3}V_{3})=-\eta_{1}\eta_{2}\eta_{3}\,.
\end{equation}
\no
It can be checked that the only rank-two directions in $L$ are given by $V_{1},V_{2},V_{3}$ 
and therefore the function $f$ is convex (in fact linear) on each rank-two line contained in $L$.

\begin{lemma}\label{lemmasv}
Let $L$ and $f$ be defined by \eqref{sub} and \eqref{func} respectively and let 
${\displaystyle P:\Mqt\to L}$ be the orthogonal projection onto $L$.  
Then for each $\e>0$ there exists $k=k(\e)>0$ such that the function 
$F:\Mqt\to\R$ given by 
\begin{equation}\label{ext}
F(X)=f(PX) + \e |X|^{2} + \e |X|^{4} + k|X-PX|^{2}
\end{equation} 
is rank-two convex on $\Mqt$.
\end{lemma}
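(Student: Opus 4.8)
The plan is to check rank-two convexity of the smooth function $F$ through its second variation and to exploit that $f$ is affine along each of the three rank-two directions spanning $L$, as in \v Sver\'ak's original argument. Since $F\in C^{\infty}$, it is rank-two convex on $\Mqt$ if and only if $\dsp\frac{d^{2}}{dt^{2}}F(X+tY)\big|_{t=0}\geq 0$ for every $X\in\Mqt$ and every $Y$ with $\rank Y\leq 2$; as this inequality is homogeneous of degree two in $Y$, we may assume $|Y|=1$. Put $\dsp A(X,Y):=\frac{d^{2}}{dt^{2}}f(PX+tPY)\big|_{t=0}$. Differentiating \eqref{ext} and using $|Y|=1$ together with $\dsp\frac{d^{2}}{dt^{2}}|X+tY|^{4}\big|_{t=0}=4|X|^{2}|Y|^{2}+8(X\cdot Y)^{2}\geq 4|X|^{2}$, one obtains
\[
\frac{d^{2}}{dt^{2}}F(X+tY)\Big|_{t=0}\ \geq\ A(X,Y)+2\e+4\e|X|^{2}+2k\,|Y-PY|^{2}.
\]
Hence everything reduces to bounding the cubic term $A(X,Y)$ from below, and this is where the geometry of \eqref{rank2dir} enters.

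The key step is: there is a nondecreasing function $\omega\colon[0,\infty)\to[0,\infty)$ with $\omega(t)\to 0$ as $t\to 0^{+}$ such that
\[
|A(X,Y)|\ \leq\ |X|\,\omega\bigl(|Y-PY|\bigr)\qquad\text{for all }X\in\Mqt\text{ and all }Y\text{ with }|Y|=1,\ \rank Y\leq 2 .
\]
To see this, note that $(Z,Y)\mapsto D^{2}f(Z)[PY,PY]$ is a polynomial which is linear in $Z$, so $|A(X,Y)|\leq|X|\,g(Y)$ where $g(Y):=\sup\{|D^{2}f(Z)[PY,PY]|:Z\in L,\ |Z|=1\}$ defines a continuous function on the compact set $K:=\{Y:|Y|=1,\ \rank Y\leq 2\}$. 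Now $g$ vanishes on $K\cap L$: since $V_{1},V_{2},V_{3}$ are the only rank-two directions in $L$, every $Y\in K\cap L$ is a unit multiple of one of them, and for each $Z\in L$ the map $t\mapsto f(Z+tV_{i})$ is affine (in $-\eta_{1}\eta_{2}\eta_{3}$ exactly one coordinate changes, linearly), so $D^{2}f(Z)[V_{i},V_{i}]=0$. Define $\omega(t):=\sup\{g(Y):Y\in K,\ \mathrm{dist}(Y,L)\leq t\}$; then $\omega$ is nondecreasing, $\omega(0)=0$, and $\omega(t)\to 0$ as $t\to 0^{+}$ by a routine compactness argument (a sequence $Y_{j}\in K$ with $\mathrm{dist}(Y_{j},L)\to 0$ and $g(Y_{j})\geq\delta$ would subconverge to a point of $K\cap L$ at which $g\geq\delta$, a contradiction), while $g(Y)\leq\omega(|Y-PY|)$ by construction.

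Granting the key step, fix $\e>0$ and set $d:=|Y-PY|\in[0,1]$. Minimizing $4\e|X|^{2}-|X|\,\omega(d)$ over $|X|\geq 0$, the first display yields $\dsp\frac{d^{2}}{dt^{2}}F(X+tY)\big|_{t=0}\geq 2\e-\omega(d)^{2}/(16\e)+2k\,d^{2}$. Choose $d_{0}=d_{0}(\e)>0$ with $\omega(d_{0})\leq\sqrt{32}\,\e$. If $d\leq d_{0}$ the right-hand side is $\geq 2k\,d^{2}\geq 0$; if $d_{0}<d\leq 1$ it is $\geq 2\e-\omega(1)^{2}/(16\e)+2k\,d_{0}^{2}$, which is $\geq 0$ once $k\geq k(\e):=\omega(1)^{2}/(32\,\e\,d_{0}(\e)^{2})$. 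With this choice of $k$, $F$ is rank-two convex on $\Mqt$.

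The main obstacle is the key step. Within it, the only place where the specific matrices $V_{1},V_{2},V_{3}$ matter is the fact — asserted after \eqref{func} and to be checked from \eqref{rank2dir} — that the sole rank-two matrices contained in the three-dimensional space $L$ are the multiples of the $V_{i}$; this is what forces $g$ to vanish on $K\cap L$ and makes the compactness argument run. The genuinely delicate regime is $Y$ close to but not contained in $L$ (i.e.\ $d$ small and positive): there the crude bound $|A(X,Y)|\leq C|X|$ is useless, and one needs both the vanishing of $g$ on $K\cap L$ and the positive floor $2\e$ provided by the $\e|X|^{2}$ summand in \eqref{ext}.
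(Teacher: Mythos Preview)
Your argument is correct and is precisely the adaptation of \v Sver\'ak's original proof that the paper invokes: the paper does not give its own argument but refers to Lemma~2 in \cite{sver91}, whose proof proceeds exactly via the second-variation inequality, the compactness bound $|A(X,Y)|\le |X|\,\omega(|Y-PY|)$ coming from the affineness of $f$ along the only rank-$(n-1)$ directions in $L$, and the small/large $|Y-PY|$ dichotomy you carry out.
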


\no
Lemma \ref{lemmasv} is an obvious extension of Lemma 2 in \cite{sver91} and 
therefore we refer the reader to \cite{sver91} for its proof.  
We remark that an extension of the form \eqref{ext} is always possible if  $V_{1},V_{2},V_{3}$ are any three rank-$(n-1)$ directions in $\Mmn$ such that 
they are the only rank-$(n-1)$ directions in the subspace spanned by them and 
$f$ is defined as in \eqref{func}.
 
\begin{lemma}\label{mainth}
There exist $\e>0$ and $k>0$ such that the function $F$ given by \eqref{ext} is rank-two 
convex but not ${\mathcal S}$-quasiconvex.
\end{lemma}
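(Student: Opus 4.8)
The plan is to exhibit, for a suitably small $\e>0$ and the corresponding $k=k(\e)$ furnished by Lemma \ref{lemmasv}, an explicit smooth $\T^3$-periodic divergence free field $B:\R^3\to\Mqt$ taking values in $L$ along which \eqref{disug} fails. Following \v Sver\'ak, I would look for $B$ of plane-wave type
\[
B(x)=\sum_{i=1}^{3} g_i(\mu^{(i)}\cdot x)\,V_i,\qquad g_i(t)=\cos 2\pi t,
\]
with frequency vectors $\mu^{(i)}\in\Z^3$ to be chosen. Since $\Div\big(g_i(\mu^{(i)}\cdot x)V_i\big)=g_i'(\mu^{(i)}\cdot x)\,V_i\mu^{(i)}$, such a $B$ is divergence free as soon as $V_i\mu^{(i)}=0$ for each $i$; and because each $g_i$ has zero mean, $\int_{\T^3}B\,dx=0$, while $B(x)\in L$ by construction, so $PB=B$ and $B-PB=0$ pointwise.

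The first step is to solve $V_i\mu^{(i)}=0$. Each $V_i$ in \eqref{rank2dir} has a one-dimensional kernel, and a direct computation gives the unique (up to scaling) choices $\mu^{(1)}=(0,0,1)$, $\mu^{(2)}=(1,0,0)$, $\mu^{(3)}=(1,0,-1)$. These satisfy the single resonance relation $\mu^{(1)}-\mu^{(2)}+\mu^{(3)}=0$, no other combination $\pm\mu^{(1)}\pm\mu^{(2)}\pm\mu^{(3)}$ vanishes, and the three vectors are pairwise non-parallel. Expanding $\cos a\cos b\cos c$ by the product-to-sum formula and integrating over $\T^3$, only the term attached to this resonance survives, so by \eqref{func}
\[
\int_{\T^3} f(B)\,dx=-\int_{\T^3}\prod_{i=1}^3\cos\big(2\pi\,\mu^{(i)}\cdot x\big)\,dx=-\tfrac14<0=f\Big(\int_{\T^3}B\,dx\Big).
\]

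Next I would transfer this to $F$. Since $B(x)\in L$, the formula \eqref{ext} reduces pointwise to $F(B)=f(B)+\e|B|^2+\e|B|^4$; moreover the same orthogonality of the frequencies that killed the spurious resonances shows that $\int_{\T^3}|B|^2\,dx=\tfrac12(|V_1|^2+|V_2|^2+|V_3|^2)$ and that $\int_{\T^3}|B|^4\,dx$ is a finite constant $C$ depending only on $V_1,V_2,V_3$. Hence
\[
\int_{\T^3}F(B)\,dx=-\tfrac14+\e\big(\tfrac12(|V_1|^2+|V_2|^2+|V_3|^2)+C\big),
\]
whereas $F\big(\int_{\T^3}B\,dx\big)=F(0)=0$. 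Choosing $\e>0$ small enough that the right-hand side stays negative, and then taking $k=k(\e)$ as in Lemma \ref{lemmasv}, the function $F$ is rank-two convex on $\Mqt$ yet violates \eqref{disug} along the admissible field $B$, i.e. it is not $\S$-quasiconvex.

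All of this is essentially bookkeeping once the frequencies are in hand; the one genuinely delicate point is the simultaneous fulfilment of two constraints in the choice of the $\mu^{(i)}$: each must span the kernel of $V_i$ (so that $B$ is solenoidal), and the triple must admit \emph{exactly one} resonance $\pm\mu^{(1)}\pm\mu^{(2)}\pm\mu^{(3)}=0$ (so that $\int_{\T^3}f(B)$ is a nonzero negative number rather than $0$). That both can be met is precisely what makes the specific matrices in \eqref{rank2dir} a workable solenoidal analogue of \v Sver\'ak's rank-one triple; everything else is inherited from \cite{sver91} through Lemma \ref{lemmasv}.
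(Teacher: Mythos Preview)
Your proof is correct and is essentially the paper's own argument: your plane-wave field $B(x)=\cos(2\pi x_3)V_1+\cos(2\pi x_1)V_2+\cos(2\pi(x_1-x_3))V_3$ coincides with the explicit matrix field the paper writes down, and the remaining steps (noting $B\in L$, $\int_{\T^3}B=0$, $\int_{\T^3}f(B)<0$, then choosing $\e$ small and invoking Lemma~\ref{lemmasv}) are the same. The only difference is cosmetic: you motivate the choice of frequencies via $V_i\mu^{(i)}=0$ and compute $\int_{\T^3}f(B)=-\tfrac14$ explicitly, whereas the paper simply presents $B$ and writes $\int_{\T^3}f(B)=-\int_{\T^3}(\cos 2\pi x_1)^2(\cos 2\pi x_3)^2\,dx<0$.
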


\begin{proof}
Let $B:\T^{3}\to \Mqt$ be defined by 
\begin{equation*}
B(x)=
\left(
\begin{array}{ccc}
\cos 2\pi x_{3}           & \cos 2\pi x_{1}            & 0 \\
   0                            & \cos 2\pi x_{3}            & 0 \\
   0                            & \cos 2\pi (x_{1}-x_{3}) & \cos 2\pi x_{1} \\
\cos 2\pi (x_{1}-x_{3}) & \cos 2\pi (x_{1}-x_{3}) &  \cos 2\pi (x_{1}-x_{3})
\end{array}
\right)\,.
\end{equation*}
\no
It is readily seen that the matrix field $B$ defined above is divergence-free and it satisfies  

\begin{equation*}
\left\{
\begin{array}{l}
\displaystyle 
B(x)\in L\quad \forall\, x\in\R^{3}  \,,\\
[3mm]
\displaystyle
\int_{\T^{3}}B\, dx = 0 \,, \\   
[3mm]
\displaystyle
\int_{\T^{3}}f(B)\, dx = 
- \int_{\T^{3}}(\cos 2\pi x_{1})^{2}(\cos 2\pi x_{3})^{2}\,dx <0 \,.  
\end{array}
\right.
\end{equation*}
\no
Since $B$ is bounded, we can choose $\e>0$ such that 
\begin{equation}\label{ineq} 
\int_{\T^{3}}\Big(f(B) + \e |B|^{2} + \e |B|^{4}\Big)  \, dx <0 \,.
\end{equation}
By Lemma \ref{lemmasv} there exists $k=k(\e)$ such that the function 
\begin{equation*}
F(X)=f(PX) + \e |X|^{2} + \e |X|^{4} + k|X-PX|^{2}
\end{equation*} 
is rank-two convex. Since $|B(x)-PB(x)|=0$ for all $x$ in $\R^{3}$, 
we have from \eqref{ineq} 
\begin{equation*} 
\int_{\T^{3}} F(B(x))  \, dx <0 \,,
\end{equation*}
which concludes the proof.
\end{proof}

\begin{corollary}\label{coroll}
For all $n> 3$ and $m\geq n+1$, there exists 
$F^{(n)}:\Mmn\to \R$ such that $F$ is rank-$(n-1)$ convex 
but not ${\mathcal S}$-quasiconvex  in  $L^{p}(\R^{n},\M^{m\times n})$, 
for every $p\geq 1$.
\end{corollary}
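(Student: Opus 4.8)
The plan is to bootstrap the four-dimensional example of Lemma \ref{mainth} up to $\Mmn$ for arbitrary $n>3$ and $m\ge n+1$ by embedding the $4\times 3$ construction into the upper-left block and padding with extra variables that are made ``inert'' by strict convexity. First I would fix the subspace $\widetilde L\subset\Mmn$ consisting of matrices whose upper-left $4\times 3$ block lies in $L$ (with $L$ as in \eqref{sub}) and all other entries zero, together with the orthogonal projection $\widetilde P:\Mmn\to\widetilde L$. On $\widetilde L$ define $\widetilde f$ by transporting $f$ from \eqref{func} through the obvious identification of $\widetilde L$ with $L$. The three directions $V_1,V_2,V_3$, viewed inside $\Mmn$, still have rank $\le 2\le n-1$ and remain the only rank-$(n-1)$ directions in $\widetilde L$ (any matrix supported on a $4\times 3$ block has the same rank whether computed in $\Mqt$ or in $\Mmn$). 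Hence, by the remark following Lemma \ref{lemmasv}, an extension of the form
\begin{equation*}
F^{(n)}(X)=\widetilde f(\widetilde P X)+\e|X|^2+\e|X|^4+k|X-\widetilde P X|^2
\end{equation*}
is rank-$(n-1)$ convex on $\Mmn$ for a suitable $k=k(\e)$, since rank-$(n-1)$ convexity is tested on lines $t\mapsto X+tY$ with $\rank(Y)\le n-1$, and along such a line the argument of \cite{sver91} applies verbatim (rank-$(n-1)$ directions in $\Mmn$ are in particular not among the ``bad'' directions unless their $\widetilde L$-projection is one of the $V_j$).

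Next I would promote the divergence-free field $B$ of Lemma \ref{mainth} to a field on $\T^n$. Embed $B$ into the $4\times 3$ block of an $m\times n$ matrix field $\widetilde B:\T^n\to\Mmn$ by setting $\widetilde B(x_1,\dots,x_n)$ equal to $B(x_1,x_2,x_3)$ in that block (so it depends only on the first three variables) and zero elsewhere. Since $B$ was row-wise divergence free in the variables $x_1,x_2,x_3$ and the added columns are zero, $\Div\widetilde B=0$ in $\R^n$; also $\widetilde B$ is $\T^n$-periodic, takes values in $\widetilde L$, has zero mean, and satisfies $\int_{\T^n}\widetilde f(\widetilde B)\,dx=\int_{\T^3}f(B)\,dx<0$. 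Because $|\widetilde B-\widetilde P\widetilde B|\equiv 0$ and $\widetilde B$ is bounded, the same choice of $\e$ as in \eqref{ineq} gives $\int_{\T^n}F^{(n)}(\widetilde B)\,dx<0=F^{(n)}(0)=F^{(n)}(\int_{\T^n}\widetilde B\,dx)$, so $F^{(n)}$ violates \eqref{disug} and is not $\S$-quasiconvex.

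Finally, for the $L^p$ statement: the growth $0\le \widetilde f(\widetilde P X)+\e|X|^2+\e|X|^4\le C(1+|X|^4)$ shows $F^{(n)}$ has at most quartic growth, so the functional $u\mapsto\int F^{(n)}(u)$ and the defect inequality make sense on $L^p(\R^n,\Mmn)$ for every $p\ge 4$; for $1\le p<4$ one simply replaces $\e|X|^4$ by $\e|X|^p$ (or truncates the quartic term outside a large ball and re-runs Lemma \ref{lemmasv}, which only uses that the extra term dominates near infinity), keeping $\widetilde B$ the same bounded field so that \eqref{ineq} still holds after possibly shrinking $\e$. I do not expect a genuine obstacle here; the one point requiring a line of care is verifying that adjoining zero rows and columns does not create new low-rank directions in $\widetilde L$ and does not spoil Šverák's perturbation estimate — but this is immediate because rank is unchanged under zero-padding and the quadratic-plus-quartic penalty controls the complement of $\widetilde L$ exactly as in the $4\times 3$ case.
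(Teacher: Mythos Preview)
Your bootstrap by zero-padding has a genuine gap at the very first step. You claim that after embedding the $4\times 3$ matrices $V_1,V_2,V_3$ into the upper-left block of $\Mmn$, they ``remain the only rank-$(n-1)$ directions in $\widetilde L$''. This is false: a matrix supported on a $4\times 3$ block has rank at most $3$, so for $n\ge 4$ \emph{every} nonzero element of $\widetilde L$ has rank $\le 3\le n-1$. Thus every direction in $\widetilde L$ is a rank-$(n-1)$ direction, and rank-$(n-1)$ convexity of $F^{(n)}$ forces its restriction to $\widetilde L$, namely
\[
g(\eta)=-\eta_1\eta_2\eta_3+\e\,|{\textstyle\sum}\eta_iV_i|^2+\e\,|{\textstyle\sum}\eta_iV_i|^4,
\]
to be \emph{fully convex} on $\R^3$. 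But then Jensen's inequality gives $\int_{\T^n}F^{(n)}(\widetilde B)\,dx=\int_{\T^n}g(\widetilde B)\,dx\ge g(0)=0$ for every mean-zero field $\widetilde B$ with values in $\widetilde L$, so your test field can never violate \eqref{disug}. In other words, no choice of $\e$ and $k$ can make $F^{(n)}$ simultaneously rank-$(n-1)$ convex and non-$\S$-quasiconvex via this construction. (Your parenthetical ``rank is unchanged under zero-padding'' is correct, but the threshold defining a rank-$(n-1)$ direction moves from $2$ to $n-1$ when you change the ambient space.)

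This is precisely why the paper does \emph{not} zero-pad. It builds, for each $n$, new directions $V_i^{(n)}\in\M^{(n+1)\times n}$ inductively so that a generic combination $\alpha_1V_1^{(n)}+\alpha_2V_2^{(n)}+\alpha_3V_3^{(n)}$ has full rank $n$; concretely, $M^{(n)}(\alpha)$ is obtained from $M^{(n-1)}(\alpha)$ by adjoining a new row and column whose only nonzero entries are $M^{(n)}_{n,n}\in\{\alpha_1,\alpha_2\}$ and $M^{(n)}_{n+1,n}=\alpha_3$, so that $\rank M^{(n)}(\alpha)=n$ whenever $\alpha$ is off the coordinate axes. Only then are the $V_i^{(n)}$ the sole rank-$(n-1)$ directions in $L^{(n)}$, $f^{(n)}$ is linear along them, and the \v{S}ver\'ak extension argument goes through with $\e$ arbitrarily small. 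The same test field $B^{(n)}(x)=\cos(2\pi x_3)V_1^{(n)}+\cos(2\pi x_1)V_2^{(n)}+\cos 2\pi(x_1-x_3)V_3^{(n)}$ (which is again divergence free because each $V_i^{(n)}$ has the appropriate zero column structure) then shows failure of $\S$-quasiconvexity exactly as in Lemma~\ref{mainth}.
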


\begin{proof}
We show how to adapt the counterexample constructed in Lemma \ref{mainth} 
to an arbitrary dimension $n$. 
Since one can always increase the number of rows by 
adding some zeros while preserving the rank of the matrices, 
it is enough to consider 
the case when $m=n+1$. 
In this situation we will exhibit three matrices 
$V_{1}^{(n)},V_{2}^{(n)},V_{3}^{(n)}$ which satisfy the following properties 
\begin{align}
\label{prop1} 
& \rank (V_{i}^{(n)})\leq n-1 \quad \forall \, i=1,2,3\,, \\
\label{prop2} &\rank(\a_{1}V_{1}^{(n)}+\a_{2}V_{2}^{(n)}+\a_{3}V_{3}^{(n)})=n 
\quad \forall \, \a\in \S^{2}\setminus\{(\pm 1,0,0),(0,\pm 1,0),(0,0,\pm 1)\}\,.  
\end{align} 
For each $\a$ in $\S^{2}$ we set 
$$
M^{(n)}(\a):=\a_{1}V_{1}^{(n)}+\a_{2}V_{2}^{(n)}+\a_{3}V_{3}^{(n)}\,.
$$
We first consider the case when $n=4$. 
We define $V_{1}^{(4)},V_{2}^{(4)},V_{3}^{(4)}$ as follows
\begin{equation}\label{dim4}
V_{1}^{(4)}=\left(
\begin{array}{llll}
1 & 0 & 0 & 0\\
0 & 1 & 0 & 0\\
0 & 0 & 0 & 0\\
0 & 0 & 0 & 1\\
0 & 0 & 0 & 0
\end{array}
\right) \,,\quad 
V_{2}^{(4)}=\left(
\begin{array}{llll}
0 & 1 & 0 & 0\\
0 & 0 & 0 & 0\\
0 & 0 & 1 & 0\\
0 & 0 & 0 & 0\\
0 & 0 & 0 & 0
\end{array}
\right) \,,\quad 
V_{3}^{(4)}=\left(
\begin{array}{llll}
0 & 0 & 0 & 0\\
0 & 0 & 0 & 0\\
0 & 1 & 0 & 0\\
1 & 1 & 1 & 0\\
0 & 0 & 0 & 1
\end{array}
\right) \,.
\end{equation}
We have that $\rank(V_{1}^{(4)})=\rank (V_{3}^{(4)})=3$ and 
$\rank(V_{2}^{(4)})=2$.
In order to see that the condition \eqref{prop2} is satisfied 
it is convenient to write the explicit formula for  
$M^{(4)}(\a)$:
$$
M^{(4)}(\a)=
\left(
\begin{array}{cccc}
\a_{1} & \a_{2} & 0        & 0\\
0        & \a_{1} & 0         & 0\\
0        & \a_{3} & \a_{2} & 0\\
\a_{3} & \a_{3} & \a_{3} & \a_{1}\\
0        & 0         & 0         & \a_{3}
\end{array}
\right) \,.
$$ 
Observe that the $4\times 3$ minor of $M^{(4)}(\a)$ which is obtained 
eliminating the fifth row and the fourth column is a linear combination 
of the matrices  $V_{1},V_{2},V_{3}$ defined by \eqref{rank2dir}.
Then using the fact that $V_{1},V_{2},V_{3}$ satisfy \eqref{prop2} 
for $n=3$, one easily checks that $\rank(M^{4}(\a))=4$. 
Remark that replacing the entry 
$M^{(4)}_{4,4}(\a)=\a_{1}$ by $M^{(4)}_{4,4}(\a)=\a_{2}$  
would give another possible 
choice of $V_{1}^{(4)},V_{2}^{(4)},V_{3}^{(4)}$.

For $n=5$ we choose $V_{1}^{(5)},V_{2}^{(5)},V_{3}^{(5)}$ 
such that $M^{(5)}(\a)$ is given by
$$
M^{(5)}(\a)=
\left(
\begin{array}{ccccc}
\a_{1} & \a_{2} & 0        & 0        & 0     \\
0        & \a_{1} & 0         & 0        & 0     \\
0        & \a_{3} & \a_{2} & 0         & 0     \\
\a_{3} & \a_{3} & \a_{3} & a_{44}       & 0     \\
0        & 0         & 0         & \a_{3} & a_{55}    \\
0        & 0         & 0         & 0         &\a_{3} 
\end{array}
\right) 
$$ 
where $a_{ii}$ can be chosen in the set $\{\a_{1},\a_{2}\}$. 
Proceeding in a similar way, for every $n\geq 4$ we define the matrix $M^{(n)}(\a)$ such that 
\begin{align*}
& M^{(n)}(\a)\in \M^{(n+1)\times n}\,, \\ 
& M^{(n)}_{i,j}(\a)=M^{(n-1)}_{i,j}(\a) \quad 
   \text{ for } i\leq n, j\leq n-1\,, \\
& M^{(n)}_{n,n}(\a)=a_{nn} 
   \quad\text{ where } a_{nn}\in \{\a_{1},\a_{2}\}\,,\\
& M^{(n)}_{n+1,n}(\a))= \a_{3} \,, \\
& M^{(n)}_{i,j}(\a)=0 \quad \text{ otherwise }.
\end{align*}

\no
By construction we have that $\rank(M^{(n)}(\a))=n$  
for all $\a\in \S^{2}\setminus\{(\pm 1,0,0),(0,\pm 1,0),(0,0,\pm 1)\}$.  
For each $n$ we set $L^{(n)}:=
{\rm span}\{V_{1}^{(n)},V_{2}^{(n)},V_{3}^{(n)}\}$ 
and we define the function $f^{(n)}:L^{(n)}\to\R$ as in \eqref{func}, i.e., 
$f^{(n)}(\eta_{1}V_{1}^{(n)}+\eta_{2}V_{2}^{(n)}+\eta_{3}V_{3}^{(n)}):=
-\eta_{1}\eta_{2}\eta_{3}$. 
The sought function $F^{(n)}$ is then defined as in \eqref{ext} with 
$f^{(n)}$ in the 
place of $f$ and with $P$ the orthogonal projection onto $L^{(n)}$.
Considering the divergence free field 
$B^{(n)}(x):=\cos(2\pi x_{3})V_{1}^{(n)}+\cos(2\pi x_{1})V_{2}^{(n)}+
\cos2\pi (x_{1}-x_{3})V_{3}^{(n)}$, we see that $F^{(n)}$ is 
not ${\mathcal S}$-quasiconvex.
\end{proof}

\medskip
\centerline{\sc Acknowledgements}
\vspace{2mm}

\no
I am grateful to Enzo Nesi for drawing my attention to this problem  
and for fruitful discussions. 
This work was done during my post-doc at the Max Planck Institute 
for Mathematics in the Sciences in Leipzig.


\begin{thebibliography}{50}

\bibitem{chaudmull} Chaudhuri N., M\"uller S.: Rank-one convexity implies 
quasi-convexity on certain hypersurfaces. 
\emph{Proc. Roy. Soc. Edinburgh Sect. A} {\bf 133} (2003), no.6, 
1263--1272.


\bibitem{fonmu} Fonseca I., M\"uller S.: ${\mathcal A}$-quasiconvexity, 
   lower  semicontinuity, and Young measures. \emph{SIAM J. Math. Anal.} 
   {\bf 30} (1999), no. 6, 1355--1390 (electronic).

\bibitem{lmm08}
Lee J., M\"uller P.F.X., M\"uller S.: 
Compensated compactness, separately convex functions
and interpolatory estimates between Riesz transforms
and Haar projections. 
\emph{Comm. PDE}, accepted (2008). 
Preprint 7 (2008), Max Planck Institute for Mathematics in the Sciences, Leipzig. 


\bibitem{muller} M\"uller S.: Rank-one convexity implies quasiconvexity on 
  diagonal matrices. \emph{Int. Math. Research Not.} 1999, 1087--1095.

\bibitem{sver91} \v Sver\'ak V.: Rank-one convexity does not imply quasiconvexity. \emph{Proc. Roy. Soc. Edinburgh Sect. A} {\bf 120} (1992), no. 1-2, 185--189.

\bibitem{tartar} Tartar L.: Compensated compactness and applications to partial 
differential equations, in \emph{Nonlinear analysis and mechanics: Heriot-Watt 
Symposium IV}, Pitman Research Notes in Mathematics 39, 136--212. 
(London: Pitman, 1979.) 
 

\bibitem{tartar2} Tartar L.: Estimations fines des coefficients 
homog\'en\'eis\'es, in \emph{Ennio De Giorgi's Colloquium (Paris 1983)}, 
ed. P. Kree, 168--187. (Boston: Pitman, 1985.)  


 
\end{thebibliography}
\end{document}